\documentclass{article}

\PassOptionsToPackage{no-math}{fontspec}
\usepackage{kotex}
\usepackage{kotex-logo}

\usepackage{silence}
\AtBeginDocument{%
  \setmainhangulfont{UnBatang.ttf}[%
    BoldFont=UnBatangBold.ttf,%
    BoldItalicFont=UnBatangBold.ttf,%
    ItalicFont=UnBatang.ttf,%
    SmallCapsFont=UnBatang.ttf%
  ]%
}

\usepackage{mathrsfs}
\usepackage{amsthm, amscd}
\usepackage{amsfonts,amsmath,amssymb,amsthm,enumerate,epsfig,fancyhdr,subfigure}

\usepackage[hidelinks]{hyperref}
\hypersetup{
    colorlinks=true, linkcolor=black, citecolor=black, urlcolor=blue,
    pdfborder={0 0 0},
}
\makeatletter
\providecommand*{\toclevel@Section}{1}
\makeatother

\newtheorem{thm}{Theorem}[section]

\newtheorem{lem}[thm]{Lemma}
\newtheorem{cor}[thm]{Corollary}

\theoremstyle{definition}
\newtheorem{defn}[thm]{Definition}
\newtheorem{remark}[thm]{Remark}
\newtheorem{example}[thm]{Example}

\title{Determining Particular Solutions for Exponential-Polynomial Forcing Terms in Linear Nonhomogeneous Recurrence Relations}

\author{Heesung Shin\thanks{Department of Mathematics, Inha University, 100 Inha-ro, Michuhol-gu, Incheon 22212, Korea. Email: \texttt{shin@inha.ac.kr}}}

\def\abstractcontent{%
This paper develops a systematic method for determining particular solutions of the $k$th-order linear nonhomogeneous recurrence relation
$$a_n + c_1 a_{n-1} + \cdots + c_k a_{n-k} = \sum_{j=1}^J p_j(n){r_j}^n$$
with $n \geq k$, $c_k \neq 0$, $r_j \neq 0$.
Here each $p_j(n)$ is a polynomial.
The main result is the following:
for the characteristic polynomial $c(t)=t^k+c_1t^{k-1}+\cdots+c_k$, if $s_j$ denotes the multiplicity of $r_j$ as a root of $c(t)$ ($s_j=0$ when $r_j$ is not a root), then there exists a particular solution of the form
$q_n=\sum_{j=1}^J b_j(n)n^{s_j}r_j^n$,
where each $b_j(n)$ is a polynomial of the same degree as $p_j(n)$.
This result parallels the method of undetermined coefficients for linear ODEs with constant coefficients and yields a systematic procedure for determining the form of particular solutions.}

\begin{document}

\maketitle

\begin{abstract}
\abstractcontent
\end{abstract}

\section{Introduction}

A recurrence relation defines each term of a sequence in terms of preceding terms.
Such equations play a central role in algorithm analysis, combinatorics, and probability.
In particular, linear recurrences with constant coefficients have a structure analogous to linear ODEs with constant coefficients,
and this correspondence provides important mathematical insight \cite{GKP94, Elaydi05}.

Consider the $k$th-order linear nonhomogeneous recurrence relation
\begin{equation}\label{eq:rec}
  a_n + c_1 a_{n-1} + c_2 a_{n-2} + \cdots + c_k a_{n-k} = f(n),
  \quad n \geq k
\end{equation}
with  $c_k \neq 0$.
Its general solution has the form
$$a_n = x_n + q_n,$$
where $\{x_n\}$ is the general solution of the associated homogeneous recurrence relation and $\{q_n\}$ is one particular solution.
Thus, determining a particular solution is a key step in solving nonhomogeneous recurrences.

The case in which the forcing term $f(n)$ is a finite sum of polynomial-exponential terms,
\begin{equation}\label{eq:forcing}
  f(n) = \sum_{j=1}^J p_j(n)\,{r_j}^n,
\end{equation}
with $r_j \neq 0$, occurs frequently in applications.
Standard undergraduate textbooks \cite{GKP94, Elaydi05, Rosen19, PSLL23} typically treat the method of undetermined coefficients for cases where $p_j(n)$ is constant or $r_j=1$,
but unified treatments for general exponential-polynomial forcing terms are comparatively rare.

The main result of this paper is to establish the following theorem.
\begin{thm}[Main theorem]
Under \eqref{eq:rec} and \eqref{eq:forcing}, that is, 
$$a_n + c_1 a_{n-1} + \cdots + c_k a_{n-k} = \sum_{j=1}^J p_j(n){r_j}^n, \quad n \geq k \quad$$
with $c_k \neq 0$ and $r_j \neq 0$ for all $j$,
let $c(t)=t^k+c_1t^{k-1}+\cdots+c_k$ be the characteristic polynomial, and let $s_j$ be the multiplicity of $r_j$ as a root of $c(t)$ ($s_j=0$ if $r_j$ is not a root).
Here $r_1,\ldots,r_J$ are assumed to be distinct.
Then there exists a particular solution of the form
$$q_n=\sum_{j=1}^J b_j(n)\,n^{s_j}\,{r_j}^n$$
where each $b_j(n)$ is a polynomial of the same degree as $p_j(n)$.
\end{thm}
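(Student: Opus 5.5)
By linearity of the recurrence operator, it suffices to treat a single forcing term $p(n)r^n$ with $r\neq 0$, $p$ of degree $d$, and $s$ the multiplicity of $r$ as a root of $c(t)$. If $q^{(j)}_n$ solves the recurrence with right-hand side $p_j(n)r_j^n$, then $\sum_j q^{(j)}_n$ solves the full recurrence. Define the operator $L$ on sequences by
$$(La)_n = a_n + c_1 a_{n-1} + \cdots + c_k a_{n-k}.$$
The plan is to compute $L$ on sequences of the form $u(n)r^n$ with $u$ a polynomial.

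First I will write
$$L(u(n)r^n) = r^n \sum_{i=0}^k c_i r^{-i} u(n-i), \qquad c_0=1.$$
Expanding $u(n-i)$ by Taylor's formula, $u(n-i)=\sum_{m\ge 0} \frac{(-i)^m}{m!}u^{(m)}(n)$, this becomes
$$L(u(n)r^n)=r^n\sum_{m\ge0}\frac{u^{(m)}(n)}{m!}\,\gamma_m, \qquad \gamma_m=\sum_{i=0}^k c_i(-i)^m r^{-i}.$$
The key observation is to identify $\gamma_m$ with derivatives of $g(x)=\sum_i c_i r^{-i}e^{-ix}=r^{-k}e^{-kx}c(re^{x})$ at $x=0$: indeed $\gamma_m=g^{(m)}(0)$. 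Since $r\neq0$, $r$ is a root of $c(t)$ of multiplicity exactly $s$ iff $x=0$ is a zero of $h(x)=c(re^x)$ of multiplicity exactly $s$, because $x\mapsto re^x$ is locally biholomorphic with nonzero derivative. The factor $r^{-k}e^{-kx}$ is nonvanishing, so $g$ also has a zero of order exactly $s$ at $0$. Hence $\gamma_0=\cdots=\gamma_{s-1}=0$ and $\gamma_s\neq0$.

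Consequently the map $T:u\mapsto\sum_{m\ge s}\gamma_m u^{(m)}/m!$ lowers degree by exactly $s$: if $\deg u=e\ge s$, then $\deg T u = e-s$ with leading coefficient $\gamma_s\binom{e}{s}\cdot\mathrm{lc}(u)\neq 0$. Now take $u(n)=n^s b(n)$ with $b$ ranging over polynomials of degree at most $d$. The map $b\mapsto T(n^s b)$ sends $\mathcal P_d$ (polynomials of degree $\le d$) into $\mathcal P_d$. It is injective, since a nonzero $b$ of degree $e$ gives image of degree exactly $e$. Hence it is a bijection of the finite-dimensional space $\mathcal P_d$, so there is a unique $b$ with $T(n^s b)=p$, and degree preservation forces $\deg b=d=\deg p$. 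Then $q_n=b(n)n^sr^n$ satisfies $Lq=p(n)r^n$ for all $n$, in particular for $n\ge k$.

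The main obstacle is the multiplicity step: showing that $\gamma_0,\dots,\gamma_{s-1}$ vanish and $\gamma_s\ne0$. I would handle it by the exponential substitution above. An alternative is to write $c(t)=(t-r)^s\tilde c(t)$ with $\tilde c(r)\ne0$ and argue with the difference operator, noting that $(E-r)$ applied to $u(n)r^n$ yields $r^{n+1}\Delta u(n)$. Using this, $(E-r)^s$ lowers the degree exactly by $s$, while $\tilde c(E)$ preserves the degree since $\tilde c(r)\neq 0$; this gives a purely discrete backup argument. Finally, the $n^s$ convention for $s=0$ and the case $p=0$ are trivial and need only a remark.
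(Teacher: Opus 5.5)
Your proof is correct. It shares the paper's overall architecture: superposition, then showing that the recurrence operator lowers the degree of the polynomial factor by exactly $s$, so that $b\mapsto T(n^s b)$ is a bijection on polynomials of degree at most $m=\deg p$. Where you differ is in how you establish that degree drop.

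The paper first rescales $a_n=b_nr^n$ to move the root to $t=1$. It writes the rescaled characteristic polynomial $r^{-k}c(rt)$ as $(t-1)^s d(t)$ with $d(1)\neq 0$, factors the rescaled operator as $\Delta^s d(1+\Delta)$, and reads off the leading coefficient $d(1)(s+m)!/m!$. It then solves a triangular system; your injectivity-on-a-finite-dimensional-space step has the same content.

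You instead handle general $r$ directly. Taylor's formula, in effect $E=e^{D}$, shows that $L[u(n)r^n]=r^n\,g(D)u$ with $g(x)=r^{-k}e^{-kx}c(re^x)$. The multiplicity hypothesis then becomes the vanishing pattern $\gamma_0=\cdots=\gamma_{s-1}=0\neq\gamma_s$ of the Taylor coefficients of $g$. The two computations agree: with $c(t)=(t-r)^s\tilde c(t)$ as in your backup argument, $\gamma_s=s!\,r^{s-k}\tilde c(r)=s!\,d(1)$. So your leading coefficient $\gamma_s\binom{s+m}{s}$ is exactly the paper's.

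Your route gives the full operator on every $u(n)r^n$, not just its leading term. It is also the precise discrete counterpart of the ODE shift rule $P(D)[u\,e^{\alpha x}]=e^{\alpha x}P(D+\alpha)u$, which sharpens the ODE analogy the paper advertises. The cost is the analytic step that the order of a zero is invariant under $x\mapsto re^x$. You could also do this formally from $re^x-r=rx(1+x/2+\cdots)$.

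The paper's route stays purely discrete and elementary. Your backup argument via $(E-r)^s\tilde c(E)$ is essentially the paper's proof without the preliminary rescaling.
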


Unless otherwise stated, all coefficients, constants, and characteristic roots appearing in this paper are assumed to belong to $\mathbb{C}$.

This result directly parallels the ODE method of undetermined coefficients:
for $$a_0y^{(k)}+\cdots+a_ky=x^me^{\alpha x},$$
one seeks
$y_p=b_m(x) x^s e^{\alpha x}$,
where $b_m(x)$ is a polynomial of degree $m$ and $s$ is the multiplicity of $\alpha$ in the characteristic equation.

The paper is organized as follows.
Section 2 reviews basic theory on linear recurrence relations.
Section 3 states and proves the main theorem and concludes with an illustrative example.

\section{Preliminaries}

This section summarizes the basic concepts and results needed later.

\begin{defn}
A recurrence relation of the form
\[
  a_n + c_1 a_{n-1} + c_2 a_{n-2} + \cdots + c_k a_{n-k} = f(n),
  \quad n \geq k,
\]
is called a \emph{$k$th-order linear nonhomogeneous recurrence relation with constant coefficients}.
Here $c_1,\ldots,c_k\in\mathbb{C}$ and $c_k\neq 0$.
If $f(n)\equiv 0$, it is called a \emph{linear homogeneous recurrence relation}.

Its \emph{characteristic polynomial} is
\[
  c(t) = t^k + c_1 t^{k-1} + c_2 t^{k-2} + \cdots + c_k,
\]
and the roots of $c(t)=0$ are called \emph{characteristic roots}.
\end{defn}

\begin{thm}[General solution of linear homogeneous recurrences]\label{thm:hom}
If the characteristic polynomial of a $k$th-order linear homogeneous recurrence factors as
$$c(t)=\prod_{i=1}^{l}(t-\alpha_i)^{m_i}$$
where $\alpha_i$ are distinct complex numbers and $\sum_{i=1}^{l} m_i = k$, then the general solution is
\[
  x_n = \sum_{i=1}^{l}
  \bigl(A_{i,0} + A_{i,1}\,n + \cdots + A_{i,m_i-1}\,n^{m_i-1}\bigr)\,\alpha_i^n.
\]
Here each $A_{i,j}$ is an arbitrary constant.
See \cite[Corollary~2.24]{Elaydi05}.
\end{thm}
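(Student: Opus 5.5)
The statement to be proved is Theorem~\ref{thm:hom}: the sequences of the form $x_n=\sum_{i}\bigl(\sum_{j<m_i}A_{i,j}n^j\bigr)\alpha_i^n$ are exactly the solutions of the homogeneous recurrence $a_n+c_1a_{n-1}+\cdots+c_ka_{n-k}=0$. The plan has two halves. First, each of the $k$ basic sequences $n^j\alpha_i^n$ ($0\le j<m_i$) solves the recurrence. Second, these $k$ sequences are linearly independent in the solution space, and that space has dimension exactly $k$.

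\textbf{Dimension.} The solution space is a vector space of dimension exactly $k$, because the map
\[
\{a_n\}\mapsto(a_0,\dots,a_{k-1})
\]
is a linear isomorphism onto $\mathbb{C}^k$. Indeed, the recurrence $a_n=-c_1a_{n-1}-\cdots-c_ka_{n-k}$ determines every later term uniquely from any prescribed initial values.

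\textbf{Each basic sequence is a solution.} Introduce the shift operator $E$ defined by $(Ea)_n=a_{n+1}$. The recurrence, reindexed, reads $c(E)a=0$. Since $\alpha_i\neq 0$ (as $c_k\neq0$), factor
\[
c(E)=\prod_i (E-\alpha_i)^{m_i},
\]
where the factors commute. It then suffices to show $(E-\alpha)^{m}\bigl(n^j\alpha^n\bigr)=0$ for $j<m$. Compute
\[
(E-\alpha)\bigl(g(n)\alpha^n\bigr)=\alpha\,\bigl(g(n+1)-g(n)\bigr)\alpha^n .
\]
So $E-\alpha$ acts on $g(n)\alpha^n$ as $\alpha$ times the forward difference $\Delta$ acting on $g$. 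Since $\Delta$ lowers the degree of a polynomial by one, $m>j$ applications annihilate $n^j$, which gives the claim.

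\textbf{Independence.} This is the main obstacle. Suppose $\sum_i g_i(n)\alpha_i^n=0$ for all $n\ge0$, with $\deg g_i<m_i$, not all $g_i$ zero. I would argue by induction on the number $l$ of distinct roots appearing with nonzero $g_i$.
\begin{itemize}
\item If $l=1$, then $g_1(n)\alpha_1^n=0$ with $\alpha_1\ne0$ forces the polynomial $g_1$ to vanish at infinitely many points, so $g_1=0$.
\item For the inductive step, apply $(E-\alpha_l)^{\deg g_l+1}$ to the relation. This kills the $l$-th term. For $i\ne l$, the operator $E-\alpha_l$ sends $g(n)\alpha_i^n$ to
\[
\bigl(\alpha_i g(n+1)-\alpha_l g(n)\bigr)\alpha_i^n .
\]
The polynomial $\alpha_i g(n+1)-\alpha_l g(n)$ has the same degree as $g$ and leading coefficient multiplied by $\alpha_i-\alpha_l\ne0$. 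So each $g_i$ with $i\neq l$ is replaced by a nonzero polynomial of the same degree, and we obtain a nontrivial relation with $l-1$ roots. This contradicts the induction hypothesis.
\end{itemize}

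\textbf{Conclusion.} We now have $k=\sum m_i$ linearly independent solutions inside a $k$-dimensional solution space. Hence they form a basis, and every solution has the stated form with arbitrary constants $A_{i,j}$.
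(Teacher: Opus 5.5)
Your proof is correct and complete. The paper does not prove this theorem itself; it only cites Elaydi's Corollary~2.24, so the meaningful comparison is with that standard textbook argument.

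Your first two steps match that argument in substance. The solution space is $k$-dimensional because $(a_0,\dots,a_{k-1})$ determines a solution uniquely. The identity $(E-\alpha)\bigl[g(n)\alpha^n\bigr]=\alpha\,(\Delta g)(n)\,\alpha^n$ then shows that $(E-\alpha_i)^{m_i}$, and hence $c(E)$, annihilates $n^j\alpha_i^n$ for $j<m_i$. This is the same annihilator mechanism the paper later uses in Lemma~\ref{lem:poly}.

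The genuine difference is in the independence step. The cited treatment, as I recall it, shows that the Casoratian of the $k$ basic solutions at $n=0$ does not vanish. That amounts to evaluating a confluent Vandermonde determinant, which is a nonzero multiple of $\prod_{i<j}(\alpha_j-\alpha_i)^{m_im_j}$. You avoid determinants altogether. You strip off one exponential block at a time with $(E-\alpha_l)^{\deg g_l+1}$, using that $E-\alpha_l$ preserves the degree of every other block because it multiplies the leading coefficient by $\alpha_i-\alpha_l\neq 0$.

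The determinant route gives an explicit quantitative statement: the initial-value matrix is invertible, with known determinant. But that evaluation is the hard part and is usually left as an exercise. Your route is entirely elementary. It also proves a stronger, reusable fact: the sequences $n^j\alpha^n$, over distinct nonzero $\alpha$ and arbitrary $j\ge 0$, are linearly independent on $n\ge 0$. After that, the dimension count does the job the determinant would do.

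One minor cosmetic point: you do not need $\alpha_i\neq 0$ to factor $c(E)$. Where you actually use it is the base case $g_1(n)\alpha_1^n=0\Rightarrow g_1=0$.
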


Since equation~\eqref{eq:rec} is linear, the following theorem holds.
\begin{thm}[General solution of linear nonhomogeneous recurrences]\label{thm:gen}
If one particular solution $q_n$ of \eqref{eq:rec} is known, then the general solution is $a_n=x_n+q_n$,
where $x_n$ is the general solution of the associated homogeneous recurrence relation.
\end{thm}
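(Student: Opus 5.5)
The plan is to recast the recurrence as a linear map on sequences and then show two inclusions: every sequence $x_n+q_n$ (with $x_n$ homogeneous) solves \eqref{eq:rec}, and every solution of \eqref{eq:rec} has this form.

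For a sequence $\{a_n\}_{n\ge 0}$ of complex numbers, define the sequence $L(a)$ by
\[
  L(a)_n = a_n + c_1 a_{n-1} + \cdots + c_k a_{n-k}, \qquad n\ge k .
\]
Then \eqref{eq:rec} reads $L(a)_n=f(n)$ for all $n\ge k$, and the associated homogeneous recurrence reads $L(x)_n=0$ for all $n\ge k$. The first step is to check that $L$ is linear: $L(\lambda a+\mu b)=\lambda L(a)+\mu L(b)$. This holds termwise, because each $L(a)_n$ is a fixed linear combination of finitely many entries of $a$.

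\emph{Sums of this form are solutions.} Let $x$ be any solution of the homogeneous recurrence and $q$ the given particular solution. By linearity,
\[
  L(x+q)_n = L(x)_n + L(q)_n = 0 + f(n) = f(n) \quad\text{for all } n\ge k,
\]
so $x_n+q_n$ solves \eqref{eq:rec}.

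\emph{Every solution is of this form.} Let $a$ be an arbitrary solution of \eqref{eq:rec} and set $x=a-q$. Then $L(x)_n=f(n)-f(n)=0$ for all $n\ge k$, so $x$ solves the associated homogeneous recurrence. Hence $a=x+q$ with $x$ homogeneous. By Theorem~\ref{thm:hom}, this $x$ is given by the displayed formula there for suitable constants $A_{i,j}$. So the family $\{x_n+q_n\}$, with $x_n$ ranging over the general homogeneous solution, is exactly the set of all solutions of \eqref{eq:rec}.

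There is no real obstacle here; the only point needing care is what ``general solution'' means. I will interpret it as ``the set of all solutions'' and prove both inclusions explicitly, as above, rather than only verifying that $x_n+q_n$ is a solution.

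As an optional consistency check, both sides form a $k$-parameter family. A solution of \eqref{eq:rec} is uniquely determined by $a_0,\dots,a_{k-1}$, since $a_n$ for $n\ge k$ is obtained by forward substitution. Likewise the homogeneous solution space is $k$-dimensional, and this is reflected in the $k=\sum_i m_i$ free constants in Theorem~\ref{thm:hom}.
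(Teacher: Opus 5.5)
Your proof is correct and takes the paper's approach: the paper justifies this theorem only by the one-line remark that \eqref{eq:rec} is linear. Your argument is that linearity argument written out in full, with both inclusions (homogeneous solution plus $q$ is a solution, and any solution minus $q$ is homogeneous) made explicit.
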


Define the shift operator $E$ by $E[a_n]=a_{n+1}$.
Then the recurrence can be written in operator form as 
$$c(E)[a_{n-k}] = E^k[a_{n-k}]+c_1E^{k-1}[a_{n-k}]+\cdots+c_k\cdot 1[a_{n-k}] = f(n).$$
The forward difference operator $\Delta=E-1$ satisfies
$$\Delta[a_n]=a_{n+1}-a_n,$$
and for any polynomial $p(n)$ of degree at most $m$, we have $\Delta^{m+1}[p(n)]=0$.

\section{Main Theorems}

In this section, we establish the main result for exponential-polynomial forcing terms.
We begin with a key lemma for the purely polynomial case.

\begin{lem}[Polynomial forcing]\label{lem:poly}
For the $k$th-order linear nonhomogeneous recurrence relation
\[
  a_n + c_1 a_{n-1} + \cdots + c_k a_{n-k} = p(n),
  \quad n \geq k,
\]
where $m$ is a degree of the polynomial $p(n)$ and 
$s$ is the multiplicity of $t=1$ as a root of the characteristic polynomial $c(t)$,
in particular, $s=0$ when $c(1)\neq 0$,
there exists a particular solution of the form 
$$h_n=b(n)n^s,$$
where $b(n)$ is a polynomial of degree $m$.
\end{lem}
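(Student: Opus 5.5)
We will prove Lemma~\ref{lem:poly} by writing the recurrence in operator form and using the forward difference operator $\Delta=E-1$. Since $1$ is a root of $c(t)$ of multiplicity exactly $s$, we factor $c(t)=(t-1)^s d(t)$ with $d(1)\neq 0$. Substituting $t=1+u$ gives a polynomial $c(1+u)=u^s g(u)$ with $g(0)=d(1)\neq 0$. Writing the recurrence as $c(E)[a_{n-k}]=p(n)$, equivalently $E^{-k}c(E)[a_n]=p(n)$, the operator acting on $a_n$ becomes $L=E^{-k}\Delta^s g(\Delta)$. Because $E^{-k}=(1+\Delta)^{-k}$, we have formally $L=\Delta^s\,G(\Delta)$, where $G(u)=(1+u)^{-k}g(u)$ is a power series with $G(0)=g(0)\neq 0$.

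The key point is that on the space $\mathcal P_N$ of polynomials of degree at most $N$, $\Delta$ is nilpotent and strictly lowers degree. Hence any power series in $\Delta$ truncates to a polynomial in $\Delta$ on $\mathcal P_N$. In particular $E^{-k}=(1+\Delta)^{-k}$ acts on $\mathcal P_N$ as the finite sum $\sum_{i\le N}\binom{-k}{i}\Delta^i$, which agrees with the genuine backward shift $a_n\mapsto a_{n-k}$ on polynomials. The operator $G(\Delta)$ restricted to $\mathcal P_m$ has the form $G(0)I+(\text{nilpotent})$, so it is invertible and preserves exact degree, since the leading coefficient is multiplied by $G(0)\neq 0$. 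Thus we set $u(n)=G(\Delta)^{-1}[p(n)]$, a polynomial of exact degree $m$.

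It then remains to solve $\Delta^s[h]=u$ with $h(n)=b(n)n^s$ and $\deg b=m$. We would argue this by a dimension count. Consider $W=\{n^s b(n):\deg b\le m\}$. The map $\Delta^s\colon W\to\mathcal P_m$ is linear between spaces of equal dimension $m+1$. It is injective, because $\ker\Delta^s=\mathcal P_{s-1}$ and $W\cap\mathcal P_{s-1}=\{0\}$. Hence it is bijective, and $\Delta^s$ maps $n^{s+j}$ to a polynomial with leading term $\frac{(s+j)!}{j!}n^j$. So the preimage $h=n^s b(n)$ of $u$ has $\deg b=m$ exactly. Finally $h$ satisfies $L[h]=G(\Delta)\Delta^s[h]=G(\Delta)[u]=p$, using that $\Delta$ commutes with $G(\Delta)$ on polynomials. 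Therefore $h_n$ satisfies the recurrence for all $n\ge k$, indeed for all $n$.

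The main obstacle is justifying the formal identity $E^{-k}c(E)=\Delta^s G(\Delta)$ on polynomial sequences, that is, that the backward shift is correctly represented by the truncated binomial series. To avoid this, we would first clear the inverse: apply $c(E)$ to the shifted sequence $a_{n-k}$ and solve $c(E)[w]=p(n+k)$ for a polynomial $w$, then set $h(n)=w(n-k)$. Then only $c(1+\Delta)=\Delta^s g(\Delta)$ is needed, which is a finite polynomial identity. The price is that $w(n-k)=b(n)n^s$ no longer holds automatically. So we would instead directly solve $c(E)[h(n-k)]=p(n)$ on $W$. We would note that $h\mapsto c(E)[h(n-k)]$ is linear on $W$, maps it into $\mathcal P_m$, and is injective because its kernel consists of polynomial solutions of the homogeneous recurrence of degree below $s$, which intersect $W$ trivially. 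The same dimension argument then gives bijectivity, and the degree is again read off from the leading coefficients.
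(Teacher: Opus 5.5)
Your argument is correct, and your main route (the first three paragraphs) is organized differently from the paper's. The paper works directly with the trial coefficients. It writes $c(E)=\Delta^s d(1+\Delta)$ and observes that $c(E)$ sends $n^{s+j}$ to a polynomial of exact degree $j$ with leading coefficient $d(1)\frac{(s+j)!}{j!}\neq 0$. Hence matching coefficients of $n^m,\dots,n^0$ in $c(E)[q_{n-k}]=p(n)$ is a triangular system with nonzero diagonal, solved successively for $B_m,\dots,B_0$; the preceding annihilator step only motivates the trial form. You instead split the whole operator as $L=\Delta^s G(\Delta)$, invert the unit $G(\Delta)$ on $\mathcal P_m$, and invert $\Delta^s\colon W\to\mathcal P_m$ by injectivity plus a dimension count. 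You use the leading-term formula only to get $\deg b=m$ exactly.

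Your version handles two details the paper glosses. The backward shift in $a_{n-k}$ is treated explicitly; the paper computes $c(E)[n^{s+m}]$ rather than applying $c(E)$ to the shifted trial solution, which is harmless only because shifts preserve leading coefficients. And the exactness of $\deg b=m$ is actually proved. The paper's version is more elementary and yields an explicit recursive procedure for the coefficients, which is what the worked example uses.

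The ``main obstacle'' you flag is not one. On $\mathcal P_N$ the operator $E=I+\Delta$ is unipotent, so it is invertible with inverse $\sum_{i=0}^{N}(-\Delta)^i$. The backward shift is also a two-sided inverse of $E$ on $\mathcal P_N$, so the two coincide and $E^{-k}=(1+\Delta)^{-k}$ there. Your first three paragraphs are therefore already a complete proof, and the detour in your last paragraph is unnecessary. That fallback (injectivity of $h\mapsto c(E)[h(n-k)]$ on $W$ plus a dimension count) is in substance the paper's argument, with the triangular structure replaced by a kernel computation.
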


\begin{proof}
Write $c(t)=(t-1)^sd(t)$ with $d(1)\neq 0$.
Apply the annihilator $\Delta^{m+1}=(E-1)^{m+1}$ for polynomials of degree at most $m$ to both sides of
$c(E)[a_{n-k}] = p(n)$.
Since $E$ and $E-1$ commute,
\[
  (E-1)^{m+1}\,c(E)\,[a_{n-k}] = (E-1)^{m+1}\,p(n)=0.
\]
This is a linear homogeneous recurrence with characteristic polynomial
$$(t-1)^{m+1}c(t)=(t-1)^{s+m+1}d(t).$$
By Theorem~\ref{thm:hom}, the part corresponding to $t=1$ in its general solution is
$$(A_0+A_1n+\cdots+A_{s+m}n^{s+m})\cdot 1^n.$$

Among these terms, those corresponding to
\[
  n^0,n^1,\ldots,n^{s-1}
\]
are solutions of
$c(E)[a_{n-k}]=0$ because $(t-1)^s\mid c(t)$.
Hence a particular solution can be chosen using only
\[
  n^s,n^{s+1},\ldots,n^{s+m}.
\]
That is,
\[
  q_n = \bigl(B_0 + B_1 n + \cdots + B_m n^m\bigr)n^s=b(n)n^s.
\]

To prove existence of coefficients $B_0,\ldots,B_m$, we analyze the highest-degree term in
\[
  c(E)[n^{s+m}].
\]
Since $c(E)=(E-1)^s d(E)=\Delta^s d(1+\Delta)$,
\[
  c(E)\bigl[n^{s+m}\bigr]=\Delta^s\!\left[d(1+\Delta)\bigl[n^{s+m}\bigr]\right].
\]
Now
$$d(1+\Delta)[n^{s+m}] = d(1)\,n^{s+m} + (\text{terms of degree }\le s+m-1),$$
and the highest-degree term of $\Delta^s[n^{s+m}]$ is
$$\bigl((s+m)(s+m-1)\cdots(m+1)\bigr)n^m = \dfrac{(s+m)!}{m!}n^m.$$
Therefore
\[
  c(E)\bigl[n^{s+m}\bigr]=d(1)\cdot\frac{(s+m)!}{m!}n^m + (\text{terms of degree }\le m-1).
\]
Since $d(1)\neq 0$, the leading coefficient of $c(E)[n^{s+m}]$ is nonzero.
Hence in $$c(E)[q_{n-k}] = p(n),$$ matching coefficients of
$n^m,n^{m-1},\ldots,n^0$ successively determines
$$B_m,B_{m-1},\ldots,B_0$$ uniquely.
\end{proof}

We now prove the main theorem using this lemma.

\begin{thm}[Particular solution for a single exponential-polynomial forcing term]\label{thm:main}
For the $k$th-order linear nonhomogeneous recurrence relation
\[
  a_n + c_1 a_{n-1} + \cdots + c_k a_{n-k} = p(n)\,r^n,
  \quad n \geq k,
\]
assume that $p(n)$ is a polynomial of degree $m$ and $r\neq 0$.
Let $s$ be the multiplicity of $r$ as a root of the characteristic polynomial $c(t)$ ($s=0$ if $r$ is not a root).
Then there exists a particular solution of the form
\[
  q_n = b(n)\,n^s\,r^n,
\]
where $b(n)$ is a polynomial of degree $m$.
\end{thm}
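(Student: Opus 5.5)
The plan is to reduce Theorem~\ref{thm:main} to Lemma~\ref{lem:poly} by the substitution $a_n = r^n u_n$, which strips off the exponential factor. Since $r\neq 0$, this substitution is a bijection between sequences. Substituting into the recurrence and dividing by $r^n$ gives
\[
  u_n + \frac{c_1}{r}u_{n-1} + \frac{c_2}{r^2}u_{n-2} + \cdots + \frac{c_k}{r^k}u_{n-k} = p(n), \quad n\ge k .
\]
This is again a $k$th-order linear nonhomogeneous recurrence with constant coefficients $\tilde c_i = c_i/r^i$, and $\tilde c_k = c_k/r^k \neq 0$. So it falls under the hypotheses of Lemma~\ref{lem:poly}.

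Next I will identify the characteristic polynomial of the new recurrence:
\[
  \tilde c(t) = t^k + \frac{c_1}{r}t^{k-1} + \cdots + \frac{c_k}{r^k} = r^{-k}\,c(rt).
\]
This relation is checked term by term, since $r^{-k}c_i(rt)^{k-i} = (c_i/r^i)t^{k-i}$. It follows that $t=1$ is a root of $\tilde c$ of exactly the same multiplicity as $r$ is a root of $c$. Concretely, if $c(t)=(t-r)^s d(t)$ with $d(r)\neq 0$, then $\tilde c(t) = r^{-k}(rt-r)^s d(rt) = r^{s-k}(t-1)^s d(rt)$, and $d(r\cdot 1)=d(r)\neq 0$. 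In particular the multiplicity is $0$ precisely when $c(r)\ne 0$. Thus the integer $s$ in Lemma~\ref{lem:poly}, applied to the transformed recurrence, coincides with the $s$ in the theorem.

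Lemma~\ref{lem:poly} then supplies a particular solution $h_n = b(n)n^s$ of the transformed recurrence, with $\deg b = m = \deg p$. Undoing the substitution, $q_n = r^n h_n = b(n)n^s r^n$ satisfies the original recurrence. Indeed, multiplying the transformed identity for $h$ by $r^n$ recovers $\sum_i c_i q_{n-i} = p(n)r^n$, with $c_0=1$. This completes the argument.

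I do not expect a genuine obstacle. The one step that needs care is the multiplicity bookkeeping $\tilde c(t) = r^{-k}c(rt)$, together with the check that the nonvanishing of $d$ transfers from $r$ to $1$; the factorization written above handles this cleanly. An operator-theoretic alternative would use the identity $c(E)[r^n g(n)] = r^n c(rE)[g(n)]$, but the direct substitution is simpler and is the route I will take.
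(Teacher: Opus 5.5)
Your proposal is correct and follows the paper's proof essentially step for step: substitute $a_n = r^n u_n$, divide by $r^n$, identify the new characteristic polynomial as $r^{-k}c(rt)$ so that $t=1$ has multiplicity $s$, apply Lemma~\ref{lem:poly}, and multiply back by $r^n$. Your explicit factorization $c(t)=(t-r)^s d(t)$ with $d(r)\neq 0$, and your check that $c_k/r^k\neq 0$, make the multiplicity transfer and the lemma's hypotheses a little more explicit than the paper's one-line assertion.
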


\begin{proof}
Substitute $a_n=b_nr^n$ (valid since $r\neq 0$). Then the original recurrence becomes
\[
  b_nr^n + c_1b_{n-1}r^{n-1} + \cdots + c_kb_{n-k}r^{n-k} = p(n)r^n.
\]
Divide both sides by $r^{n}$:
\begin{equation}\label{eq:b}
  b_n + \frac{c_1}{r}b_{n-1} + \frac{c_2}{r^2}b_{n-2} + \cdots + \frac{c_k}{r^k}b_{n-k} = p(n).
\end{equation}
The characteristic polynomial of \eqref{eq:b} is
\[
  \widetilde{c}(t)
  = t^k + \frac{c_1}{r}t^{k-1} + \cdots + \frac{c_k}{r^k}
  = \frac{1}{r^k}c(rt).
\]
If $r$ is a root of $c(t)$ with multiplicity $s$, then $t=1=r/r$ is a root of $c(rt)=0$ with multiplicity $s$.
Thus $t=1$ is a root of $\widetilde{c}(t)$ with multiplicity $s$.

Applying Lemma~\ref{lem:poly} to \eqref{eq:b}, we obtain a particular solution of the form
$\widetilde{q_n}=b(n)n^s$, where $b(n)$ has degree $m$.
Hence, since $a_n=b_nr^n$, the desired particular solution is
\[
  q_n=b(n)n^sr^n.
\]
\end{proof}

The next theorem extends the main result to finite sums in the forcing term.

\begin{thm}[Particular solution for finite-sum exponential-polynomial forcing]\label{thm:sum}
Consider the $k$th-order linear nonhomogeneous recurrence relation
\[
  a_n + c_1 a_{n-1} + \cdots + c_k a_{n-k}
  = \sum_{j=1}^J p_j(n)\,r_j^n,
  \quad n \geq k.
\]
Here each $p_j(n)$ is a polynomial and $r_j \neq 0$ for all $j$.
Let $c(t)$ be the characteristic polynomial, and for each $j$, let $s_j$ be the multiplicity of $r_j$ as a root of $c(t)$
($s_j=0$ if $r_j$ is not a root).
Then there exists a particular solution of the form
\[
  q_n = \sum_{j=1}^J b_j(n)\,n^{s_j}\,{r_j}^n,
\]
where each $b_j(n)$ is a polynomial of the same degree as $p_j(n)$.
\end{thm}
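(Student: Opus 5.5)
The plan is to reduce Theorem~\ref{thm:sum} to Theorem~\ref{thm:main} by superposition, using the linearity of the operator $c(E)$. For each $j=1,\ldots,J$, consider the auxiliary recurrence
\[
  a_n + c_1 a_{n-1} + \cdots + c_k a_{n-k} = p_j(n)\,r_j^n, \quad n\ge k,
\]
whose right-hand side is a single exponential-polynomial term. Theorem~\ref{thm:main} applies to it directly, since $r_j\neq 0$ and $s_j$ is exactly the multiplicity of $r_j$ as a root of $c(t)$. It yields a particular solution $q^{(j)}_n=b_j(n)\,n^{s_j}\,r_j^n$, where $b_j(n)$ is a polynomial of the same degree as $p_j(n)$.

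Next I set $q_n=\sum_{j=1}^J q^{(j)}_n$ and check that it solves the full recurrence. In operator form,
\[
  c(E)[q_{n-k}]=\sum_{j=1}^J c(E)\bigl[q^{(j)}_{n-k}\bigr]=\sum_{j=1}^J p_j(n)\,r_j^n ,
\]
since $c(E)$ is a linear combination of powers of the shift $E$, and each power of $E$ is additive on sequences. So $q_n$ has exactly the form claimed in the statement, and this step is routine.

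The only point needing care is the degree claim, and in particular its interaction with the hypothesis that the $r_j$ are distinct (assumed in the Main theorem). If two $r_j$ coincided, merging them could cause cancellation in $p_j$, and the degrees might not be well defined. With distinct $r_j$, each $b_j$ comes from its own application of Theorem~\ref{thm:main}, so $\deg b_j=\deg p_j$ holds term by term. Nothing in the sum can alter an individual $b_j$, because the form $\sum_j b_j(n)n^{s_j}r_j^n$ is recorded term by term.

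Degenerate cases should be mentioned briefly. If some $p_j\equiv 0$, take $b_j\equiv 0$. If $p_j$ is a nonzero constant (degree $0$), Lemma~\ref{lem:poly} already covers it. The main "obstacle" is therefore just bookkeeping: making sure the application of Theorem~\ref{thm:main} for each $j$ uses the multiplicity $s_j$ specific to $r_j$, rather than a single shared exponent.
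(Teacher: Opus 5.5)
Your proposal is correct and follows essentially the same route as the paper: apply Theorem~\ref{thm:main} separately to each forcing term $p_j(n)\,r_j^n$ to get $q_n^{(j)}=b_j(n)\,n^{s_j}\,r_j^n$, then add these using linearity of $c(E)$. Your worry about distinctness of the $r_j$ is unnecessary, since Theorem~\ref{thm:sum} does not assume it; as you note, each $b_j$ comes from its own application of Theorem~\ref{thm:main} and the form is recorded term by term.
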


\begin{proof}
For each $j$, consider
\[
  a_n + c_1 a_{n-1} + \cdots + c_k a_{n-k} = p_j(n)\,{r_j}^n.
\]
By Theorem~\ref{thm:main}, there exists a particular solution of the form
\[
  q_n^{(j)} = b_j(n)\,n^{s_j}\,r_j^n.
\]
By linearity of recurrence relations,
\[
  \sum_{j=1}^J\bigl(q_n^{(j)} + c_1q_{n-1}^{(j)} + \cdots + c_kq_{n-k}^{(j)}\bigr)
  = \sum_{j=1}^J p_j(n)\,{r_j}^n.
\]
Therefore $q_n:=\sum_{j=1}^J q_n^{(j)}$ is a particular solution of the original recurrence and has the desired form.
\end{proof}

The following corollary summarizes standard cases implied by Theorem~\ref{thm:sum}.

\begin{cor}\label{cor:table}
In \eqref{eq:rec}, let $c(t)$ be the characteristic polynomial and let $s$ denote the multiplicity of $r$ in $c(t)$.
Table~\ref{tab:particular} gives the form of particular solutions for representative forcing terms.
\end{cor}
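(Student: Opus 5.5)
The corollary is a direct specialization of Theorem~\ref{thm:sum}, and in particular of Theorem~\ref{thm:main}. The plan is therefore to go through the rows of Table~\ref{tab:particular} one at a time. For each row I identify the forcing term as a finite sum $\sum_j p_j(n)r_j^n$, read off the degree $m_j$ of each $p_j$ and the multiplicity $s_j$ of each base $r_j$ as a root of $c(t)$, and then write down the form $\sum_j b_j(n)n^{s_j}r_j^n$ with $\deg b_j=m_j$ supplied by the theorem. I expect the table's representative forcing terms to be of the following kinds:
\begin{itemize}
\item a constant $C$;
\item a polynomial $p(n)$ of degree $m$;
\item $C r^n$;
\item $n^m r^n$;
\item $p(n)r^n$;
\item possibly trigonometric terms $\rho^n\cos(\theta n)$ and $\rho^n\sin(\theta n)$.
\end{itemize}
In each case the entry of the table should be exactly the specialization.

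For the polynomial rows I take $r=1$. Then $s$ is the multiplicity of $1$ in $c(t)$, and Lemma~\ref{lem:poly} (equivalently, Theorem~\ref{thm:main} with $r=1$) gives the form $b(n)n^s$ with $\deg b=m$. A constant forcing term is the case $m=0$, so the trial solution is $Bn^s$. For the rows of the form $Cr^n$ and $p(n)r^n$, Theorem~\ref{thm:main} applies directly and gives $Bn^sr^n$ and $b(n)n^sr^n$ respectively. I will also note in each case that $s=0$ recovers the familiar ``same form as the forcing term'' rule.

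The only step needing care is any trigonometric row. My plan there is to rewrite
\[
\rho^n\cos(\theta n)=\tfrac12\bigl((\rho e^{i\theta})^n+(\rho e^{-i\theta})^n\bigr),
\]
and similarly for $\sin$, which is allowed because the paper works over $\mathbb{C}$. I then apply Theorem~\ref{thm:sum} with $r_1=\rho e^{i\theta}$ and $r_2=\rho e^{-i\theta}$.

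This brings up the main obstacle. Theorem~\ref{thm:sum} attaches a separate exponent $s_j$ to each base, whereas a table entry states a single $s$. When the $c_i$ are real, $c(t)$ has real coefficients, so conjugate roots have equal multiplicity and $s_1=s_2=s$. The sum $b_1(n)n^s r_1^n+b_2(n)n^s r_2^n$ can then be regrouped as
\[
n^s\rho^n\bigl(u(n)\cos\theta n+v(n)\sin\theta n\bigr)
\]
with polynomials $u,v$ of degree at most $m$. If the coefficients need not be real, I would state the entry with the multiplicity of $\rho e^{i\theta}$ and of $\rho e^{-i\theta}$ taken separately, or else impose real coefficients as a standing assumption for that row.
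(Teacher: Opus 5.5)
Your proposal is correct and matches the paper's implicit argument: the paper gives no separate proof and simply notes that each row of Table~\ref{tab:particular} specializes Theorem~\ref{thm:sum}, via Theorem~\ref{thm:main} with $r=1$ for the rows $d$ and $d\,n$, which is exactly your row-by-row reading. The table contains only the rows $d$, $d\,n$, $d\,r^n$, and $p(n)\,r^n$, so your trigonometric discussion is sound but not needed.
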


\begin{table}[t]
\centering
\begin{tabular}{|c|c|c|}
\hline
$f(n)$ & Condition & Form of $q_n$ \\
\hline
$d$ & $c(1)\neq 0$ ($s=0$) & $B$ \\
$d$ & $c(1)=0$ ($s\geq 1$) & $B\,n^s$ \\
$d\,n$ & $c(1)\neq 0$ ($s=0$) & $B_1 n + B_0$ \\
$d\,n$ & $c(1)=0$ ($s\geq 1$) & $(B_1 n + B_0)n^s$ \\
$d\,r^n$ & $c(r)\neq 0$ ($s=0$) & $B\,r^n$ \\
$d\,r^n$ & $c(r)=0$ ($s\geq 1$) & $B\,n^s\,r^n$ \\
$p(n)\,r^n$ & $c(r)\neq 0$ ($s=0$) & $b(n)\,r^n$ \\
$p(n)\,r^n$ & $c(r)=0$ ($s\geq 1$) & $b(n)\,n^s\,r^n$ \\
\hline
\end{tabular}
\caption{Forms of particular solutions according to the forcing term $f(n)$ ($b(n)$ has the same degree as $p(n)$)}
\label{tab:particular}
\end{table}

\begin{example}\label{ex:sum-single}
Find the sequence $\{a_n\}$ satisfying the folllowing recurrence relation with initial conditions:
\[
  a_n-4a_{n-1}+5a_{n-2}-2a_{n-3}=n^3+n^2\,2^n+n\,3^n; \ a_0=0, \ a_1=1, \ a_2=2.
\]
\end{example}

\begin{proof}[Solution.]
The characteristic polynomial is
\[
  c(t)=t^3-4t^2+5t-2=(t-1)^2(t-2).
\]

We write the forcing term as
\[
  f(n)=p_1(n){r_1}^n+p_2(n){r_2}^n+p_3(n){r_3}^n,
\]
\[
  p_1(n)=n^3,\ r_1=1,\quad p_2(n)=n^2,\ r_2=2,\quad p_3(n)=n,\ r_3=3.
\]

In $c(t)$, the multiplicities are $s_1=2$ for $r_1=1$, $s_2=1$ for $r_2=2$, and $s_3=0$ for $r_3=3$.
Thus, by Theorem~\ref{thm:sum}, we set
\[
  q_n=q_n^{(1)}+q_n^{(2)}+q_n^{(3)},
\]
with trial forms
\[
  q_n^{(1)}=(A_0+A_1n+A_2n^2+A_3n^3)n^2,
  \qquad
  q_n^{(2)}=(B_0+B_1n+B_2n^2)n\,2^n,
\]
\[
  q_n^{(3)}=(D_0+D_1n)3^n.
\]

Matching coefficients for the first term gives
\[
  q_n^{(1)}=-\frac{99}{4}n^2-\frac{65}{12}n^3-\frac34n^4-\frac1{20}n^5,
\]
and for the second term gives
\[
  q_n^{(2)}=\left(\frac{74}{3}n-6n^2+\frac43n^3\right){2}^n.
\]
For the third term,
\[
  q_n^{(3)}=\left(-\frac{81}{4}+\frac{27}{4}n\right)3^n.
\]
Hence, one particular solution is
\[
  q_n=-\frac{99}{4}n^2-\frac{65}{12}n^3-\frac34n^4-\frac1{20}n^5
  +\left(\frac{74}{3}n-6n^2+\frac43n^3\right){2}^n
  +\left(-\frac{81}{4}+\frac{27}{4}n\right)3^n.
\]

The general solution of the associated homogeneous recurrence relation is
\[
  x_n=C_1+C_2n+C_3 2^n,
\]
therefore, the general solution of the original recurrence is
\[
  a_n=x_n+q_n=C_1+C_2n+C_3 2^n+q_n.
\]

Substituting the initial conditions yields
\[
  C_1+C_3=\frac{81}{4},
\]
\[
  C_1+C_2+2C_3=\frac{487}{15},
\]
\[
  C_1+2C_2+4C_3=\frac{4481}{60}.
\]
Solving this system yields
\[
  C_1=-\frac{39}{4},\qquad C_2=-\frac{1067}{60},\qquad C_3=30.
\]
Therefore the solution to the linear nonhomogeneous recurrence relation satisfying the given initial conditions is
\[
  \begin{aligned}
    a_n={}&-\frac{39}{4}-\frac{1067}{60}n-\frac{99}{4}n^2-\frac{65}{12}n^3
    -\frac34n^4-\frac1{20}n^5 \\
    &+\left(30+\frac{74}{3}n-6n^2+\frac43n^3\right)2^n
    +\left(-\frac{81}{4}+\frac{27}{4}n\right)3^n.
  \end{aligned}
\]
\end{proof}

\begin{remark}
This example illustrates the superposition principle for forcing terms of the form
$f(n)=n^3+n^2\,{2}^n+n\,3^n$:
a particular solution is constructed as the sum of particular solutions for each term.
It also shows that different multiplicities $2$, $1$, $0$ 
of characteristic roots $1$, $2$, $3$
induce the factors $n^2$, $n$, and $1$, respectively.
\end{remark}


\section*{Acknowledgments}

We acknowledge the assistance of AI language models in refining and correcting the English expressions throughout this paper.




\end{document}